\documentclass[a4paper,11pt, reqno]{amsart}
\usepackage[utf8]{inputenc}
\usepackage{amssymb,amsmath}
\usepackage{amsthm}
\usepackage{amsrefs} 
\usepackage{a4wide}
\usepackage{paralist}
\usepackage{marvosym}

\usepackage{dsfont}

\newtheorem{theorem}{Theorem}[section]
\newtheorem{lemma}{Lemma}[section]

\theoremstyle{definition}

\newtheorem*{acknowledgements}{Acknowledgements}

\theoremstyle{remark}
\newtheorem{remark}{Remark}[section]

\DeclareMathOperator{\E}{\mathds{E}}
\DeclareMathOperator{\N}{\mathbb{N}}
\DeclareMathOperator{\R}{\mathbb{R}}
\DeclareMathOperator{\Defi}{\mathrel{\mathop:}=}
\DeclareMathOperator{\1}{\mathds{1}}

\title[Moderate Deviations for the REM]{On the accuracy of the normal approximation for the free energy in the REM}

\author[R. Meiners]{Raphael Meiners}
\address{Raphael Meiners: Institute for Mathematical Statistics, University of M\"{u}nster, Germany}
\email{Raphael.Meiners@uni-muenster.de}

\author[A. Reichenbachs]{Anselm Reichenbachs}
\address{Anselm Reichenbachs (\Letter): Faculty of Mathematics, Ruhr-Universit\"{a}t Bochum, Germany}
\email{Anselm.Reichenbachs@rub.de}

\subjclass[2010]{60F10, 82B44}
\keywords{Random Energy Model, moderate deviations, large deviations}
\date{\today}

\begin{document}

\begin{abstract}
In the present paper we consider the fluctuations of the free energy in the random energy model (REM) on a moderate deviation scale.
We find that for high temperatures the normal approximation holds only in a narrow range of scalings away from the CLT. For scalings of higher order,
probabilities of moderate deviations decay faster than exponentially.
\end{abstract}

\maketitle

\section{Introduction}
The random energy model (REM for short) is a disordered spin system from statistical mechanics, invented by Derrida in 1980 \cites{Der2,Der}. It is a toy model to describe a system of $N$ particles that can assume one of the $2^N$ accessible states from the set $\mathcal{S}_N = \{-1,+1\}^N$, called the \emph{configuration space}. The energy of a state $\sigma \in \mathcal{S}_N$ is given by $H(\sigma) = -\sqrt{N} X_{\sigma}$ where $X_{\sigma}$ is a $\mathcal{N}(0,1)$-distributed random variable, and the energies of different states are assumed to be independent, that is, $(H({\sigma}))_{\sigma\in\mathcal{S}_N}$ is (for fixed N) a sequence of i.\,i.\,d.\ normal distributed random variables. Despite its far-reaching simplifications, the REM is an important model from statistical mechanics and has been intensively studied over the last decades. More recent expositions of the model can be found in the books \cites{Bov,Tal}.

In the following, let $(\Omega,\mathcal{F},P)$ be the probability space on which the triangular array of independent $\mathcal{N}(0,1)$-distributed random variables $\big(X_{\sigma}:\sigma \in \mathcal{S}_N, N \in \N \big)$ is defined. The probability of observing a configuration $\sigma \in \mathcal{S}_N$ of the $N$ particle system is given by the random Gibbs measure \[P_{N, \beta}(\sigma) ~\Defi~ \frac{e^{-\beta H(\sigma)}}{Z_N(\beta)}\]
where $\beta>0$ is the \textit{inverse temperature} and $Z_N(\beta)$ a random normalization given by
\[Z_N(\beta) ~\Defi~ \sum_{\sigma \in \mathcal{S}_N} e^{\beta \sqrt{N} X_{\sigma}},\]
which is called \textit{partition function}. Obviously, the minus sign in the definition of the random \textit{Hamiltonian} $H(\sigma)$ and the minus sign in the definition of the Gibbs measure cancel each other, however, it is convention to use them. 

In statistical mechanics, one is interested in the existence of the so-called \textit{free energy} \[F_N(\beta) ~\Defi~ \frac{1}{N}\log Z_N(\beta)\] in the limit $N\rightarrow\infty$ in an appropriate sense. Note that this definition of the free energy differs from the one used by physicists by the factor $-\beta^{-1}$, which is constant and, therefore, omitted by mathematicians. A complete result on the existence of the free energy in the sense of almost sure convergence and convergence in $L^p$ was proved by Olivieri and Picco in 1984 \cite{OliPic} and reads as follows:

\begin{theorem}[\cite{OliPic}]
\label{LLN}
Let $\beta_c=\sqrt{2\log2}$. For all $\beta>0$
\begin{equation}\label{fe}
\lim_{N\rightarrow\infty} F_N(\beta) ~=~ F(\beta) ~\Defi~ 
\begin{cases}
\frac{\beta^2}{2}+\frac{\beta_c^2}{2} &\text{ if }\beta\leq\beta_c\\
\beta\beta_c &\text{ if }\beta > \beta_c
\end{cases}
\end{equation}
$P$-almost surely and in $L^p(\Omega,\mathcal{F},P)$ for any $1\leq p<\infty$.
\end{theorem}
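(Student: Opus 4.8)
The plan is to reduce both assertions to the convergence of the deterministic sequence $\E[F_N(\beta)]$ via Gaussian concentration, and then to pin down $\lim_N\E[F_N(\beta)]$ by matching an upper and a lower bound. For the concentration step I would regard $\log Z_N(\beta)$ as a function of the Gaussian vector $(X_\sigma)_{\sigma\in\mathcal S_N}\in\R^{2^N}$: since $\partial_{X_\tau}\log Z_N(\beta)=\beta\sqrt N\,P_{N,\beta}(\tau)$, one has $\|\nabla\log Z_N(\beta)\|_2^2=\beta^2 N\sum_\tau P_{N,\beta}(\tau)^2\le\beta^2 N$, so $F_N(\beta)=N^{-1}\log Z_N(\beta)$ is Lipschitz with constant $\beta/\sqrt N$. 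The Gaussian concentration inequality then gives $P(|F_N(\beta)-\E F_N(\beta)|\ge t)\le 2\exp(-Nt^2/(2\beta^2))$ for all $t>0$, which is summable in $N$; Borel--Cantelli yields $F_N(\beta)-\E F_N(\beta)\to 0$ $P$-a.s., and the sub-Gaussian bound yields $\E|F_N(\beta)-\E F_N(\beta)|^p=O(N^{-p/2})$ for every $p\ge1$. It therefore suffices to prove $\E F_N(\beta)\to F(\beta)$, and the claimed a.s.\ and $L^p$ convergence follow immediately.

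For the upper bound (valid for all $\beta>0$) I would use the elementary inequality $(\sum_i a_i)^\gamma\le\sum_i a_i^\gamma$ for $a_i\ge0$, $\gamma\in(0,1]$, together with Jensen's inequality:
\[
\gamma\,\E[\log Z_N(\beta)]=\E\big[\log Z_N(\beta)^\gamma\big]\le\log\E\big[Z_N(\beta)^\gamma\big]\le\log\Big(\sum_{\sigma\in\mathcal S_N}\E\big[e^{\gamma\beta\sqrt N X_\sigma}\big]\Big)=N\log 2+\tfrac12\gamma^2\beta^2 N,
\]
so $\E F_N(\beta)\le\gamma^{-1}\log 2+\tfrac12\gamma\beta^2$ for every $\gamma\in(0,1]$. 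Minimizing the right-hand side (take $\gamma=1$ when $\beta\le\beta_c$ and $\gamma=\beta_c/\beta$ when $\beta>\beta_c$, using $\log 2=\beta_c^2/2$) yields $\E F_N(\beta)\le F(\beta)$ for every $N$.

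For the lower bound I would split into the two regimes. When $\beta\ge\beta_c$, use $Z_N(\beta)\ge e^{\beta\sqrt N M_N}$ with $M_N\Defi\max_{\sigma\in\mathcal S_N}X_\sigma$, so $F_N(\beta)\ge\beta M_N/\sqrt N$; the standard extreme-value estimates $P(M_N>u)\le 2^N\bar\Phi(u)$ and $P(M_N<u)\le\exp(-2^N\bar\Phi(u))$ (applied at $u=(\beta_c\pm\varepsilon)\sqrt N$) give $M_N/\sqrt N\to\beta_c$ a.s.\ and in $L^1$, hence $\liminf_N\E F_N(\beta)\ge\beta\beta_c=F(\beta)$. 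When $\beta<\beta_c$, I would run the second-moment method on the counting variable $\mathcal N_N\Defi\#\{\sigma\in\mathcal S_N:X_\sigma\ge\beta\sqrt N\}$: here $\E\mathcal N_N=2^N\bar\Phi(\beta\sqrt N)$ with $N^{-1}\log\E\mathcal N_N\to\log 2-\beta^2/2>0$, and since $\mathcal N_N$ is a sum of independent Bernoulli variables, $\operatorname{Var}\mathcal N_N\le\E\mathcal N_N$, so Chebyshev gives $\mathcal N_N\ge\tfrac12\E\mathcal N_N$ with probability tending to $1$. On that event $Z_N(\beta)\ge\mathcal N_N\,e^{\beta^2N}\ge\tfrac12\E\mathcal N_N\,e^{\beta^2 N}$, so $F_N(\beta)\ge\log 2-\beta^2/2+\beta^2+o(1)=F(\beta)+o(1)$ in probability. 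To upgrade this to $\liminf_N\E F_N(\beta)\ge F(\beta)$, I would combine it with the deterministic bound $F_N(\beta)=\log 2+N^{-1}\log\big(2^{-N}Z_N(\beta)\big)\ge\log 2+\beta\,\bar X_N/\sqrt N$ (Jensen again, with $\bar X_N\Defi 2^{-N}\sum_{\sigma}X_\sigma\sim\mathcal N(0,2^{-N})$), which shows $\E[F_N(\beta)^-]\to0$, so that a lower bound holding in probability transfers to the means. Together with the upper bound this gives $\E F_N(\beta)\to F(\beta)$, completing the proof.

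I expect the lower bound for $\beta<\beta_c$ to be the main obstacle: the summands of $Z_N(\beta)$ are heavy-tailed, and the naive second moment of $Z_N(\beta)$ only controls its fluctuations for $\beta<\beta_c/\sqrt2$. This is what forces one to work instead with the bounded counting variable $\mathcal N_N$ at the optimal level $u=\beta$ (equivalently, with a suitably truncated partition function) and then to carry out the bookkeeping needed to pass from the in-probability lower bound back to $\E F_N(\beta)$; the remaining steps are comparatively soft.
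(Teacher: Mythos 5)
Your argument is correct, but note that the paper itself does not prove this statement at all: Theorem \ref{LLN} is quoted from Olivieri and Picco \cite{OliPic}, and the present article only uses it as background. So there is no internal proof to compare with; what you have written is a self-contained proof along the lines of the standard modern treatments of the REM (concentration plus annealed bounds, as in Bovier's and Talagrand's books), rather than the original route of \cite{OliPic}. Your three ingredients all check out: the gradient computation $\|\nabla \log Z_N(\beta)\|_2^2=\beta^2N\sum_\tau P_{N,\beta}(\tau)^2\le \beta^2 N$ makes $F_N(\beta)$ Lipschitz with constant $\beta/\sqrt N$, so Gaussian concentration reduces everything to $\E F_N(\beta)$; the fractional-moment/Jensen bound $\E F_N(\beta)\le \gamma^{-1}\log 2+\gamma\beta^2/2$ optimized over $\gamma\in(0,1]$ gives exactly $F(\beta)$ in both phases (this neatly avoids the truncated second moment for the upper bound when $\beta>\beta_c$); and the lower bound via $M_N/\sqrt N\to\beta_c$ for $\beta\ge\beta_c$ and via the counting variable $\mathcal N_N$ at level $\beta\sqrt N$ for $\beta<\beta_c$ is the correct way around the failure of the naive second moment beyond $\sqrt{\log 2/2}$, which you rightly flag. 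Two small remarks: the in-probability-to-expectation transfer could be done more cheaply by intersecting your good event with the concentration event $\{|F_N(\beta)-\E F_N(\beta)|\le\varepsilon\}$, which makes the $\E[F_N(\beta)^-]\to 0$ bookkeeping (and the asserted $L^1$ convergence of $M_N/\sqrt N$, which you state without proof) unnecessary; and the final $L^p$ statement should be closed off explicitly by combining $\E|F_N(\beta)-\E F_N(\beta)|^p=O(N^{-p/2})$ with $\E F_N(\beta)\to F(\beta)$, which is immediate. Neither point is a gap.
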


\noindent The convergence in $L^1$ implies that the quenched free energy $\E F_N(\beta)$ also converges to $F(\beta)$ and, consequently, \[\lim_{N\rightarrow\infty} |F_N(\beta)- \E F_N(\beta)| ~=~ 0\] holds $P$-almost surely, which is why the free energy of the REM is said to be a self-averaging quantity. Moreover, the annealed free energy is given by
\[\frac{1}{N}\log\E Z_{N,\beta}~=~\frac{\beta^2}{2}+\frac{\beta_c^2}{2},\]
and, therefore, the quenched free energy and annealed free energy coincide in the limit $N \to \infty$ if $\beta\leq\beta_c$. This breaks down for $\beta > \beta_c$, where the quenched free energy is strictly less than the annealed free energy.

Even more, one already obtained a precise picture of free energy's deviations and fluctuations. In view of Theorem \ref{LLN}, it is a natural first step to ask for refinements of this limit theorem on the level of large deviations and, therefore, we shall briefly recall what a large deviation principle (LDP) is. For a thorough introduction to the field we refer to the books \cites{Dem,Ell}. Let $(\mathcal{X},\mathcal{B}_{\mathcal{X}})$ be a measurable space, consisting of a Hausdorff topological space $\mathcal{X}$ endowed with the Borel $\sigma$-field $\mathcal{B}_{\mathcal{X}}$. In addition to that, let $\gamma_n \to \infty$ be a sequence of real numbers and $I:\mathcal{X} \rightarrow [0,\infty]$ be a lower semicontinuous function. A sequence of random variables $(X_n)_{n \in \N}$ defined on some probability space $(S, \mathcal{A}, \mathds{P})$ with values in $(\mathcal{X},\mathcal{B}_{\mathcal{X}})$ is said to satisfy the \textit{large deviation principle} (LDP for short) with \textit{speed} $\gamma_n$ and \textit{rate function} $I$ if \[-\inf_{x\in A^{\circ}}I(x)~\leq~\liminf_{n\rightarrow\infty}\frac{1}{\gamma_n}\log \mathds{P}\left(X_n\in A\right)~\leq~\limsup_{n\rightarrow\infty}\frac{1}{\gamma_n}\log \mathds{P}\left(X_n\in A\right)~\leq~-\inf_{x\in\overline{A}}I(x)\]
for all $A\in\mathcal{B}_{\mathcal{X}}$. The rate function $I$ is said to be \textit{good} if the level sets $\left\{x \in \mathcal{X}: I(x) \leq c \right\}$ are compact subsets of $\mathcal{X}$ for all $c\in\mathbb{R}$.

As already hinted at, the probabilities of $\mathcal{O}(1)$-deviations from the limiting free energy $F(\beta)$ have already been quantified. In \cite{FedFlaMor}, Fedrigo, Flandoli and Morandin proved a large deviation theorem for the free energy, which is stated next:
 
\begin{theorem}[LDP, \cite{FedFlaMor}]
\label{LDP}
The sequence of random variables $(\frac{1}{N}\log Z_{N}(\beta))_{N \in \N}$ satisfies the LDP with speed $N$ and good rate function $I$ given by
\[I(x) ~=~ 
\begin{cases}
\infty& \text{ if }x<F(\beta)\\
0& \text{ if }x=F(\beta)\\
\frac{x^2}{2 \beta^2}-\log 2& \text{ if }x>F(\beta),
\end{cases}\]
where $F(\beta)$ are the limit points of the free energy defined in \eqref{fe}.  
\end{theorem}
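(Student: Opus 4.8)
The plan is to prove the large deviation lower bound for open sets and the upper bound for closed sets separately, splitting $\R$ into the three regions appearing in the definition of $I$. Since $I$ vanishes only at $F(\beta)$, Theorem~\ref{LLN} disposes of the central region: if an open set $G$ contains $F(\beta)$ then $P(F_N(\beta)\in G)\to 1$, so $\liminf_N\frac1N\log P(F_N(\beta)\in G)\ge 0=-I(F(\beta))$, and if a closed set $C$ contains $F(\beta)$ then $\inf_C I=0$ and the upper bound $\limsup_N\frac1N\log P(F_N(\beta)\in C)\le 0$ is trivial. It remains to estimate deviations $F_N(\beta)\le F(\beta)-\delta$ and $F_N(\beta)\ge F(\beta)+\delta$, $\delta>0$, and to provide the matching lower bound above $F(\beta)$.

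\emph{Deviations below $F(\beta)$.} To force $I\equiv\infty$ on $(-\infty,F(\beta))$ it suffices that $P(F_N(\beta)\le F(\beta)-\delta)$ decay faster than exponentially. I would bound $Z_N(\beta)$ below by the contribution of the configurations with near-optimal energy: set $a=\beta$ if $\beta\le\beta_c$ and $a=\beta_c-\eta$ (small $\eta>0$) if $\beta>\beta_c$, and let $K_N=\#\{\sigma\in\mathcal S_N:X_\sigma\ge a\sqrt N\}$, which is $\mathrm{Bin}(2^N,\overline\Phi(a\sqrt N))$-distributed, $\overline\Phi$ denoting the standard normal tail. By the choice of $a$ the mean $\E K_N$ is of order $e^{N(\log 2-a^2/2)}$ and tends to infinity exponentially fast, so the Chernoff bound for the binomial lower tail gives $P(K_N\le\tfrac12\E K_N)\le e^{-c\,\E K_N}$, which is doubly exponentially small; on the complementary event $Z_N(\beta)\ge K_N e^{\beta a N}\ge e^{N(\log 2-a^2/2+\beta a)+o(N)}$, and the exponent equals $F(\beta)$ if $\beta\le\beta_c$ and $F(\beta)-\mathcal O(\eta)$ if $\beta>\beta_c$. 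Letting $N\to\infty$ and then $\eta\downarrow 0$ gives the claim.

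\emph{Deviations above $F(\beta)$.} The governing mechanism is the ``single big jump'': for $y>F(\beta)$, $Z_N(\beta)$ most cheaply reaches $e^{Ny}$ by having one energy as large as $(y/\beta)\sqrt N$, at cost $2^N\overline\Phi((y/\beta)\sqrt N)=e^{-N I(y)+o(N)}$, since $(y/\beta)^2/2-\log 2=I(y)$. For the upper bound, fix $y>F(\beta)$ and small $\varepsilon>0$ and split $Z_N(\beta)=Z_N^{\mathrm{hi}}+Z_N^{\mathrm{lo}}$ by truncating energies at $(y/\beta-\varepsilon)\sqrt N$, so that $\{Z_N(\beta)\ge e^{Ny}\}\subseteq\{Z_N^{\mathrm{hi}}\ge\tfrac12 e^{Ny}\}\cup\{Z_N^{\mathrm{lo}}\ge\tfrac12 e^{Ny}\}$. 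On the first event $\max_\sigma X_\sigma\ge(y/\beta-\varepsilon)\sqrt N$, of probability at most $2^N\overline\Phi((y/\beta-\varepsilon)\sqrt N)\le e^{-N(I(y)-g(\varepsilon))}$ with $g(\varepsilon)\to 0$. The variable $Z_N^{\mathrm{lo}}$ is a sum of $2^N$ i.i.d.\ random variables bounded by $e^{N(y-\beta\varepsilon)}$, and its expectation — a truncated Gaussian integral — is at most $e^{Ny-\Omega(N)}$ for $\varepsilon$ small, whence (via $\mathrm{Var}(\xi)\le B\,\E\xi$ for $\xi\in[0,B]$) its variance is at most $e^{2Ny-\Omega(N)}$; Bernstein's inequality then yields $P(Z_N^{\mathrm{lo}}\ge\tfrac12 e^{Ny})\le\exp(-c\,e^{c'N})$. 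Combining and letting $\varepsilon\downarrow 0$ gives $\limsup_N\frac1N\log P(F_N(\beta)\ge y)\le-I(y)$ for every $y>F(\beta)$; as $I$ is nondecreasing on $(F(\beta),\infty)$, for a closed set $C\not\ni F(\beta)$ one has $\inf_C I=I\big(\inf(C\cap(F(\beta),\infty))\big)$, the part of $C$ below $F(\beta)$ being handled above, which is exactly the desired upper bound.

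\emph{Matching lower bound and conclusion.} Given an open $G\not\ni F(\beta)$ with $G\cap(F(\beta),\infty)\neq\emptyset$, pick $y\in G\cap(F(\beta),\infty)$ and small $\varepsilon'>0$, and consider the event that exactly one $\sigma^\ast$ satisfies $X_{\sigma^\ast}\in[(y/\beta)\sqrt N,(y/\beta+\varepsilon')\sqrt N]$, that all other energies lie below $(y/\beta-\varepsilon')\sqrt N$, and that $\sum_{\sigma:\,X_\sigma<(y/\beta-\varepsilon')\sqrt N}e^{\beta\sqrt N X_\sigma}\le\tfrac12 e^{Ny}$ (which holds with probability $1-\exp(-c\,e^{c'N})$ by the Bernstein estimate above). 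On this event $Z_N(\beta)$ agrees with $e^{Ny}$ up to a factor $e^{\mathcal O(N\varepsilon')}$, so $F_N(\beta)\in G$ once $\varepsilon'$ is small, and its probability is at least $2^N\big(\overline\Phi((y/\beta)\sqrt N)-\overline\Phi((y/\beta+\varepsilon')\sqrt N)\big)\big(1-\overline\Phi((y/\beta-\varepsilon')\sqrt N)\big)^{2^N-1}\big(1-\exp(-c\,e^{c'N})\big)=e^{-N I(y)+o(N)}$, provided $\varepsilon'$ is small enough that $2^N\overline\Phi((y/\beta-\varepsilon')\sqrt N)\to 0$. Thus $\liminf_N\frac1N\log P(F_N(\beta)\in G)\ge-I(y)$, and optimizing over $y\in G$ completes the lower bound. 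Assembling the three regions yields the LDP with rate function $I$, which is good since every sublevel set $\{I\le c\}$ is the union of $\{F(\beta)\}$ with a bounded closed interval, hence compact. The most delicate point is the Bernstein estimate for $Z_N^{\mathrm{lo}}$: one must check, uniformly in $\beta>0$ and $y>F(\beta)$, that the truncated first moment of $Z_N^{\mathrm{lo}}$ (and thereby its variance) is of size $e^{Ny-\Omega(N)}$ (resp.\ $e^{2Ny-\Omega(N)}$), which requires distinguishing whether $y/\beta$ exceeds $\beta$ and estimating the resulting truncated Gaussian integrals.
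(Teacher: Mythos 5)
This theorem is quoted in the paper from \cite{FedFlaMor} without any proof, so there is no in-paper argument to compare against; your proposal has to stand on its own as a self-contained derivation. As such it is essentially correct and follows the natural mechanism: superexponential decay below $F(\beta)$ (hence $I\equiv\infty$ there) via a second-moment/Chernoff lower bound on $Z_N$, and a ``one big jump'' analysis above $F(\beta)$, with the upper bound obtained by truncating at $(y/\beta-\varepsilon)\sqrt N$ and killing the truncated part by a Bernstein-type estimate, and the matching lower bound by planting a single exceptional energy. The key quantitative claims check out: for $y>F(\beta)$ one always has $y/\beta>\beta_c$, so the planted-configuration event has probability $e^{-NI(y)+o(N)}$ and the ``all other energies below threshold'' factor tends to $1$; and the truncated first moment is indeed $e^{Ny-cN}$ in both regimes $y/\beta\geq\beta$ and $\beta_c<y/\beta<\beta$ (the case distinction you flag at the end), so the truncated sum exceeds $\tfrac12 e^{Ny}$ only with doubly exponentially small probability. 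The reduction of the upper bound for closed sets to the two one-sided estimates via monotonicity of $I$ on $(F(\beta),\infty)$, and the goodness of $I$, are also fine.

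Two small repairs are needed. First, in the lower-deviation step your prescription $a=\beta$ for all $\beta\leq\beta_c$ breaks down at $\beta=\beta_c$: there $\E K_N=2^N\,\overline\Phi(\beta_c\sqrt N)\sim c/\sqrt N$ does \emph{not} diverge, so the Chernoff bound gives nothing; you should take $a$ strictly below $\beta_c$ (e.g.\ $a=\beta-\eta$ uniformly, which still yields the exponent $F(\beta)-\mathcal O(\eta)$ and then $\eta\downarrow0$). Second, in the lower bound the probability of your planted event does not literally factor as the product you wrote, because the constraint on the truncated sum concerns the same variables as the ``all below $(y/\beta-\varepsilon')\sqrt N$'' constraint; the clean way is $P(A\cap B)\geq P(A)-P(\tilde B^c)$ with $A$ the below-threshold event and $\tilde B$ the event that the sum of the \emph{truncated} variables is at most $\tfrac12 e^{Ny}$, which is exactly your Bernstein estimate. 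Both points are routine fixes and do not affect the architecture of the argument.
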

\noindent Note that large deviation techniques can also be used to prove \eqref{fe} $P$-a.\,s.\ via Varadhan's Lemma (cf.\ \cite{DorWed}).

While Theorem \ref{LDP} describes the atypical behavior of $F_N(\beta)$ by studying the probabilities of large deviations, the typical behavior is described by theorems on its fluctuations, i.\,e.\ by theorems on distributional convergence of the properly rescaled free energy. This has been done by Bovier, Kurkova and L\"owe in \cite{BovKurLoe}. They proved the existence of multiple phase transitions on the level of distributional convergence and found that the fluctuations of the free energy are exponentially small. What is more, they are Gaussian if and only if $\beta \leq \sqrt{\log 2/2}$:
 
\begin{theorem}[CLT, \cite{BovKurLoe}]
\label{CLT}
\mbox{}
\begin{enumerate}[(i)]
\item
For $\beta<\sqrt{\log 2/2}$
\begin{equation*}
 e^{\frac{N}{2}(\log 2-\beta^2)}\log\left(\frac{Z_{\beta,N}}{\E Z_{\beta,N}}\right)\overset{\mathcal{D}}{\longrightarrow}\mathcal{N}(0,1).
\end{equation*}
\item
For $\beta=\sqrt{\log 2/2}$
\begin{equation*}
 e^{\frac{N}{2}(\log 2-\beta^2)}\log\left(\frac{Z_{\beta,N}}{\E Z_{\beta,N}}\right)\overset{\mathcal{D}}{\longrightarrow}\mathcal{N}(0,1/2).
\end{equation*}
\end{enumerate}
\end{theorem}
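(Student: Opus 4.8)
The plan is to reduce both statements to a Lyapunov central limit theorem for a truncated partition function, truncated precisely at the scale of the extreme values of the disorder. Since $\E Z_N(\beta) = 2^N e^{N\beta^2/2}$, setting $W_\sigma \Defi e^{\beta\sqrt N X_\sigma - N\beta^2/2}$ exhibits $Z_N(\beta)/\E Z_N(\beta) = 2^{-N}\sum_{\sigma\in\mathcal S_N} W_\sigma$ as an average of $2^N$ i.i.d.\ random variables with $\E W_\sigma = 1$. I would introduce the truncation level $c_N \Defi \beta_c\sqrt N$ and the variables $\widetilde W_\sigma \Defi W_\sigma\,\1_{\{X_\sigma\le c_N\}}$, $\widetilde Z_N \Defi \sum_{\sigma\in\mathcal S_N} e^{\beta\sqrt N X_\sigma}\,\1_{\{X_\sigma\le c_N\}}$. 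Because $\beta_c^2 = 2\log 2$, a union bound gives $P(\exists\,\sigma : X_\sigma > c_N) \le 2^N\,\overline{\Phi}(\beta_c\sqrt N) = O(N^{-1/2})$, where $\Phi$ is the standard normal distribution function and $\overline{\Phi} = 1-\Phi$; hence $Z_N = \widetilde Z_N$ with probability tending to one, so it suffices to prove both parts with $\widetilde Z_N$ in place of $Z_N$.

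The core step is a central limit theorem for the triangular array $(\widetilde W_\sigma)_{\sigma\in\mathcal S_N}$. Completing the square in the Gaussian integral gives $\E[\widetilde W_\sigma^{\,p}] = e^{\beta^2 N p(p-1)/2}\,\Phi\!\big((\beta_c - p\beta)\sqrt N\big)$ for every $p\ge1$, whence, using $\Phi((\beta_c-\beta)\sqrt N)\to 1$,
\[
s_N^2 \;\Defi\; 2^N\,\mathrm{Var}(\widetilde W_\sigma) \;=\; 2^N e^{\beta^2 N}\,\Phi\!\big((\beta_c-2\beta)\sqrt N\big)\,(1+o(1)).
\]
For $\beta < \sqrt{\log 2/2} = \beta_c/2$ one has $\beta_c - 2\beta > 0$, so $\Phi((\beta_c-2\beta)\sqrt N)\to 1$ and $s_N^2 = 2^N e^{\beta^2 N}(1+o(1))$; for $\beta = \beta_c/2$ the argument of $\Phi$ vanishes identically, so $s_N^2 = \tfrac12\,2^N e^{\beta^2 N}(1+o(1))$, and this prefactor $\tfrac12 = \Phi(0)$ is the source of the limiting variance in part (ii).

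Next I would verify the Lyapunov condition: choosing $\delta>0$ so small that $(2+\delta)\beta < \beta_c$, the same moment formula controls $\E[\widetilde W_\sigma^{\,2+\delta}]$, and one checks that $s_N^{-(2+\delta)}\,2^N\,\E|\widetilde W_\sigma - \E\widetilde W_\sigma|^{2+\delta}\to 0$ — all exponential factors cancel and only a negative power of $N$ survives. (For the untruncated $W_\sigma$ this ratio instead grows like $2^{N\delta^2/4}$ once $\beta = \beta_c/2$, which is why the truncation cannot be omitted at the critical temperature.) The Lyapunov CLT then yields $\big(\widetilde Z_N/\E Z_N - \E[\widetilde Z_N/\E Z_N]\big)\big/\sqrt{2^{-N}\mathrm{Var}(\widetilde W_\sigma)}\to\mathcal N(0,1)$. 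Substituting the two expressions for $s_N^2$ and using that $e^{\frac N2(\log 2-\beta^2)}\big(\E[\widetilde Z_N/\E Z_N]-1\big) = -e^{\frac N2(\log 2-\beta^2)}\,\overline{\Phi}\!\big((\beta_c-\beta)\sqrt N\big)\to 0$ (at $\beta=\beta_c/2$ only thanks to the polynomial prefactor in the Gaussian tail), Slutsky's theorem gives $e^{\frac N2(\log 2-\beta^2)}\big(Z_N/\E Z_N - 1\big)\to\mathcal N(0,1)$ for $\beta<\beta_c/2$ and $\to\mathcal N(0,1/2)$ for $\beta=\beta_c/2$.

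It then remains to pass from $Z_N/\E Z_N - 1$ to $\log(Z_N/\E Z_N)$. Since $\mathrm{Var}(Z_N/\E Z_N) = 2^{-N}(e^{\beta^2 N}-1)\to 0$ for $\beta^2 \le \tfrac12\log 2 < \log 2$, the ratio converges to $1$ in $L^2$ and hence in probability; on the event $\{|Z_N/\E Z_N - 1|\le\tfrac12\}$, of probability tending to one, Taylor's theorem gives $\log(Z_N/\E Z_N) = (Z_N/\E Z_N - 1) + R_N$ with $|R_N|\le (Z_N/\E Z_N - 1)^2$, so $e^{\frac N2(\log 2-\beta^2)}R_N = \big[e^{\frac N2(\log 2-\beta^2)}(Z_N/\E Z_N-1)\big]\cdot\big[Z_N/\E Z_N-1\big]$ is a product of a tight sequence and a sequence converging to $0$ in probability, hence $\to 0$ in probability, and Slutsky's theorem finishes the proof. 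The step I expect to be the main obstacle is the critical case $\beta=\beta_c/2$: the naive i.i.d.\ CLT heuristic is marginally false there, the Lyapunov condition and the negligibility of the recentering hold only because of the precise truncation level $c_N=\beta_c\sqrt N$ together with the subexponential corrections in Gaussian tail asymptotics, and it is exactly these corrections — via $\Phi(0)=\tfrac12$ — that pin the limiting variance at $1/2$ rather than $1$.
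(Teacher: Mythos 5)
The paper itself contains no proof of Theorem \ref{CLT}: the result is imported from \cite{BovKurLoe}, and the remark following it only sketches the strategy (linearize the logarithm via $\log(1+x)=x+o(x)$, then prove a CLT for the triangular array $Y_N(\sigma)$, with Lindeberg's condition failing at $\beta=\sqrt{\log 2/2}$ so that the critical case needs a finer, truncation-type argument). Your proposal is a correct self-contained proof along exactly these lines, and it mirrors the machinery the paper uses later for the MDP: your reduction from $\log(Z_N/\E Z_N)$ to $Z_N/\E Z_N-1$ via $|\log(1+x)-x|\le x^2$ and the variance $2^{-N}(e^{\beta^2N}-1)$ is the same device as Lemma \ref{Lemma1}, and your truncation at $X_\sigma\le\beta_c\sqrt N$ coincides to leading order with the paper's threshold $c_N(\beta)$ of \eqref{Defc} at the critical temperature, where $\beta+\log 2/(2\beta)=\beta_c$. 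The moment identity $\E[\widetilde W_\sigma^{\,p}]=e^{\beta^2Np(p-1)/2}\Phi((\beta_c-p\beta)\sqrt N)$, the variance asymptotics producing the factor $\Phi(0)=\tfrac12$ at $\beta=\beta_c/2$, the negligibility of the recentering term (polynomially, via the Gaussian tail prefactor, at criticality), and the final Slutsky step are all correct.

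One clause needs repair. You verify Lyapunov ``choosing $\delta>0$ so small that $(2+\delta)\beta<\beta_c$'', but at $\beta=\beta_c/2$ no such $\delta$ exists, so as literally written the Lyapunov condition is only checked in the subcritical case --- precisely the case where truncation is not even needed --- while the critical case, the whole point of part (ii), is asserted rather than computed. The assertion is true and follows in two lines from your own moment formula: at $\beta=\beta_c/2$ fix any $\delta>0$ (say $\delta=1$); then $\beta_c-(2+\delta)\beta=-\delta\beta_c/2$, so $\E[\widetilde W_\sigma^{\,2+\delta}]=e^{\beta^2N(2+\delta)(1+\delta)/2}\,\overline{\Phi}\bigl(\tfrac{\delta\beta_c}{2}\sqrt N\bigr)$, and the tail bound $\overline{\Phi}\bigl(\tfrac{\delta\beta_c}{2}\sqrt N\bigr)\le C\,(\delta\sqrt N)^{-1}e^{-\delta^2\beta_c^2N/8}$ cancels the exponential excess exactly, giving $s_N^{-(2+\delta)}\,2^N\,\E|\widetilde W_\sigma-\E\widetilde W_\sigma|^{2+\delta}=O(N^{-1/2})$. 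Note that your phrase ``all exponential factors cancel and only a negative power of $N$ survives'' describes this critical case; under your stated condition $(2+\delta)\beta<\beta_c$ the ratio in fact decays exponentially. With that clause corrected, the proof is complete and in substance the same as the argument the paper attributes to \cite{BovKurLoe}.
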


\begin{remark}
Since it will be of some importance for the present paper, we quickly want to sketch the course of action followed in \cite{BovKurLoe}: Using the Taylor expansion $\log(1+x)=x+o(x)\text{ for } x\rightarrow 0$ the authors defer the proof of a limit theorem for 
\begin{equation*}
e^{\frac{N}{2}(\log 2-\beta^2)}\log\left(\frac{Z_{\beta,N}}{\E Z_{\beta,N}}\right)~=~ e^{\frac{N}{2}(\log 2-\beta^2)}\log\left(1+\frac{Z_{\beta,N}-\E Z_{\beta,N}}{\E Z_{\beta,N}}\right)
\end{equation*}
to the more manageable random variable
\begin{equation}\label{DefofY}
e^{\frac{N}{2}(\log 2-\beta^2)} \frac{Z_{\beta,N}-\E Z_{\beta,N}}{\E Z_{\beta,N}}~=~\frac{1}{2^{N/2}}\sum_{\sigma \in \mathcal{S}_N} Y_N(\sigma)
\end{equation}
where $Y_N(\sigma)= (e^{\beta\sqrt{N}X_{\sigma}}-e^{N\beta^2/2})/e^{N\beta^2}$ are (for each $N$) i.\,i.\,d.\ random variables with mean zero and variance $s_N^2 = 1-e^{-N\beta^2} \rightarrow 1$ as $N\to \infty$. Next, the authors show that $Y_N(\sigma)$ satisfies Lindeberg's condition if $\beta < \sqrt{\log2/2}$, and obtain Theorem \ref{CLT} (i) by means of the CLT for triangular arrays. However, for $\beta = \sqrt{\log2/2} \ Y_N(\sigma)$ does \emph{not} satisfy Lindeberg's condition, which is related to the fact that $Y_N(\sigma)$'s tails become too \emph{heavy}, and the behaviour of the sum $\sum_{\sigma}Y_N(\sigma)$ is dominated by extremal events. Yet, the authors can still prove convergence in distribution to a normal distribution and attain Theorem \ref{CLT} (ii). It is worth noting that the authors also acquire complete results for $\beta > \sqrt{\log 2/2}$, where non-standard limiting distributions occur, which is due to the fact that $Y_N(\sigma)$ has even more weight on its tails in these cases.
\end{remark}

In view of Theorem \ref{CLT}, we ask the following question: can the tail probabilities \[P\left(e^{\frac{N}{2}(\log 2-\beta^2)}\log\left(\frac{Z_{\beta,N}}{\E Z_{\beta,N}}\right)>t\right)\]
be approximated by the tails of a normal distribution even for growing $t$, that is, does one find
\begin{equation*}
P\left(e^{\frac{N}{2}(\log 2-\beta^2)}\log\left(\frac{Z_{\beta,N}}{\E Z_{\beta,N}}\right)>t_N\right) ~\approx~ P(\mathcal{N}(0,1)> t_N)
\end{equation*}
even for $t_N \rightarrow \infty$? It is well-known (use e.\,g.\ \eqref{standardestimate}) that
\begin{equation*}
\lim_{N \to \infty} \frac{1}{t_N^2} \log P(\mathcal{N}(0,1)> x \,t_N) ~=~ -\frac{x^2}{2}
\end{equation*}
for any $x > 0$ and, thus, we ask for the validity of 
\begin{equation}\label{approximation}
\lim_{N \to \infty} \frac{1}{t_N^2} \log P\left(e^{\frac{N}{2}(\log 2-\beta^2)}\log\left(\frac{Z_{\beta,N}}{\E Z_{\beta,N}}\right)> x \,t_N\right) ~=~ -\frac{x^2}{2}
\end{equation}
for any $x > 0$ or, more general, for the existence of the LDP with speed $t_N^2$ and Gaussian rate function $I(x) = x^2/2$ for $\exp{(N(\log 2-\beta^2)/2)} \,t_N^{-1} \log (Z_{\beta,N}/\E Z_{\beta,N})$. Using the LDP of Theorem \ref{LDP}, we see that \eqref{approximation} does not hold if $t_N$ is of order $\Theta\big(N\,\exp{(N(\log 2-\beta^2)/2)}\big)$. Large deviation results for the remaining cases of scalings between those of the CLT and the LDP, i.\,e.\
\begin{equation*}
t_N ~\rightarrow~ \infty \text{ and } \frac{t_N}{N\,\exp{(N(\log 2-\beta^2)/2)}} ~\rightarrow~ 0,
\end{equation*}
are commonly referred to as \textit{moderate deviation results} in the literature, since one asks for deviations of $F_N(\beta)$ of order $o(1)$ from $F(\beta)$. In like manner, LDPs for scalings that are between those of the CLT and the LDP are called \emph{moderate deviation principles (MDPs)}. However, we will stick to the term LDP, since the formal definitions of the LDP and MDP are the same. Note that moderate deviations for mean field models from statistical mechanics have already been studied (cf.\ e.\,g.\ \cites{Rei,Loe}). 
\par We will show in this article that \eqref{approximation} holds if and only if $t_N = o(\sqrt{N})$, that is, \eqref{approximation} holds only in a small range of scalings close to the CLT scaling. This is particularly interesting since it is out of harmony with the general picture of moderate deviations obtained by the case of partial sums of standardized i.\,i.\,d.\ random variables $(X_i)_{i\in\mathbb{N}}$. The prototypical answer for this case is that $(t_n \sqrt{n})^{-1} \sum_{i=1}^n X_i$ satisfies under suitable conditions the LDP with speed $t_n^2$ and Gaussian rate function $I(x)=x^2/2$ for the whole range of scalings between the corresponding CLT and LLN (see \cite{EicLoe} for a necessary and sufficient condition on this type of moderate deviations). In particular, the rate function does not depend on the moderate deviation scaling.

The main result of the present paper reads as follows, where $t_N \rightarrow \infty$ is from now on a diverging sequence of real numbers:
 
\begin{theorem}[Moderate deviations for the free energy in the REM]
\mbox{}
\label{MDP}
\begin{enumerate}[(i)]
 \item
Let $\beta < \sqrt{\log2/2}$. Then, $$\frac{e^{\frac{N}{2}(\log2-\beta^2)}}{t_N} \, \log \bigg(\frac{Z_{\beta, N}}{\E Z_{\beta, N}}\bigg)$$ satisfies the large deviation principle. If $t_N = o(\sqrt{N})$, then the corresponding speed is $t_N^2$ and the good rate function is
\begin{equation*}
I(x) ~=~ \frac{x^2}{2}.
\end{equation*} 
Otherwise, if $\liminf_{n\rightarrow\infty}\frac{t_N}{\sqrt{N}}>0$, the LDP holds for any speed $\gamma_N=o(N)$ with the good rate function
\begin{equation}
I(x) ~=~ 
\begin{cases}
0&\text{ if }x=0\\
\infty&\text{ if }x \neq 0.
\end{cases}
\label{c2}
\end{equation}
\item
Let $\beta = \sqrt{\log2/2}$. Then, $$\frac{\, e^{\frac{N}{2}(\log2-\beta^2)}}{t_N} \, \log \bigg(\frac{Z_{\beta, N}}{\E Z_{\beta, N}}\bigg)$$ satisfies, for any scaling $t_N = o(\sqrt{\log N})$, the LDP with speed $t_N^2$ and good rate function $I$ given by
\begin{equation*}
I(x) ~=~ x^2.
\end{equation*}
\end{enumerate}
\end{theorem}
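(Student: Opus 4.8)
Throughout write $c_N=\tfrac N2(\log2-\beta^2)$, so the object of interest is $W_N/t_N$ with $W_N=e^{c_N}\log(Z_{\beta,N}/\E Z_{\beta,N})$, and set $V_N=e^{c_N}(Z_{\beta,N}-\E Z_{\beta,N})/\E Z_{\beta,N}=2^{-N/2}\sum_{\sigma}Y_N(\sigma)$ as in \eqref{DefofY}. I would first reduce to $V_N/t_N$ by showing $W_N/t_N$ and $V_N/t_N$ are exponentially equivalent at the relevant speed. Since $W_N-V_N=e^{c_N}(\log(1+U_N)-U_N)$ with $U_N=V_Ne^{-c_N}=Z_{\beta,N}/\E Z_{\beta,N}-1$, on $\{|U_N|\le\tfrac12\}$ we have $|W_N-V_N|\le e^{-c_N}V_N^2$, hence $P(|W_N-V_N|>\delta t_N)\le P(|U_N|>\tfrac12)+P(|V_N|>\sqrt{\delta t_N}\,e^{c_N/2})$; the first probability decays faster than $e^{-\gamma_N}$ for any $\gamma_N=o(N)$ (by Theorem~\ref{LDP}, since $\tfrac1N\log\E Z_{\beta,N}=F(\beta)$ for $\beta\le\beta_c$ and the rate function is $+\infty$ below $F(\beta)$, together with the concentration of the bulk described in the third paragraph), and the second is a very large deviation controlled by the same concentration estimates. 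Thus it suffices to prove the asserted LDP for $V_N/t_N$.

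The heart of the proof is a truncation: write $Y_N(\sigma)=\bar Y_N(\sigma)+\tilde Y_N(\sigma)$ with $\bar Y_N(\sigma)=Y_N(\sigma)\wedge b_N$ and $\tilde Y_N(\sigma)=(Y_N(\sigma)-b_N)^+$, so that $V_N=\bar V_N+\tilde V_N$, and choose the level $b_N$ according to the regime. For $\beta<\sqrt{\log2/2}$ and $t_N=o(\sqrt N)$ take $b_N=e^{\gamma N}$ with $\gamma\in(\beta(\beta_c-\beta),\tfrac12\log2)$; this interval is nonempty exactly because $\beta\neq\sqrt{\log2/2}$, and a Gaussian tail computation gives $\E\bar Y_N^2\to1$ together with $2^NP(Y_N(\sigma)>b_N)\le e^{-cN}$. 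For $\beta=\sqrt{\log2/2}$ take $b_N=2^{N/2}/r_N$ with $r_N\to\infty$ polynomially and $t_N=o(r_N)$, possible since $t_N=o(\sqrt{\log N})$; here the critical identity $2\beta=\beta_c$ forces the truncation to discard exactly half of the variance, i.e.\ $\E[Y_N(\sigma)^2\1_{\{Y_N(\sigma)>b_N\}}]\to\Phi(0)=\tfrac12$, whence $\E\bar Y_N^2\to\tfrac12$. In both cases $t_Nb_N2^{-N/2}\to0$, so $\bar Y_N$ is bounded and a second–order Taylor expansion of $\E\exp(\lambda t_N2^{-N/2}\bar Y_N)$ shows that, for every $\lambda\in\R$,
\[
\Lambda(\lambda)\;\Defi\;\lim_{N\to\infty}\frac1{t_N^2}\log\E e^{\lambda t_N\bar V_N}\;=\;\frac{\lambda^2}{2}\lim_{N\to\infty}\E\bar Y_N^2\;=\;\frac{\sigma^2\lambda^2}{2},
\]
with $\sigma^2=1$, resp.\ $\sigma^2=\tfrac12$ (the linear term and the cubic remainder both vanish after rescaling by $2^N/t_N^2$, using $2^{N/2}\E\bar Y_N=o(t_N)$ and $t_Nb_N=o(2^{N/2})$). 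The G\"artner--Ellis theorem then yields the LDP for $\bar V_N/t_N$ with speed $t_N^2$ and good rate function $\Lambda^*(x)=x^2/(2\sigma^2)$, i.e.\ $x^2/2$, resp.\ $x^2$.

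Next I would show that $\tfrac1{t_N^2}\log P(|\tilde V_N|>\delta t_N)\to-\infty$ for every $\delta>0$; this makes $\tilde V_N/t_N$ satisfy the degenerate LDP, so that $V_N/t_N$ is exponentially equivalent to $\bar V_N/t_N$ and, via the first paragraph, $W_N/t_N$ inherits the LDP of $\bar V_N/t_N$ — proving (i) for $t_N=o(\sqrt N)$ and all of (ii). For $\beta<\sqrt{\log2/2}$ this is immediate: $P(\tilde V_N\neq0)\le2^NP(Y_N(\sigma)>b_N)\le e^{-cN}$ and $N/t_N^2\to\infty$. For $\beta=\sqrt{\log2/2}$, $\tilde V_N\ge0$, so only the upper tail is at stake; conditioning on the number $J$ of exceedances $\{Y_N(\sigma)>b_N\}$ and using $\E J\to0$ together with the log–normal tail $P(Y_N(\sigma)>2^{N/2}\delta t_N)\asymp(\delta t_N)^{-2}2^{-N}(\beta_c\sqrt{2\pi N})^{-1}$ gives $P(\tilde V_N>\delta t_N)\lesssim(\delta^2\sqrt N\,t_N^2)^{-1}$, which is super–exponentially small on the scale $t_N^2$ precisely because $t_N=o(\sqrt{\log N})$ makes $\log(\sqrt N\,t_N^2)\gg t_N^2$. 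It remains to treat $\liminf t_N/\sqrt N>0$ in (i), where the rate function is \eqref{c2} at an arbitrary speed $\gamma_N=o(N)$; it suffices to prove $P(|V_N|>\varepsilon\sqrt N)\le e^{-c(\varepsilon)N}$ for every $\varepsilon>0$ (then $\tfrac1{\gamma_N}\log P(|V_N/t_N|>\varepsilon)\le-cN/\gamma_N\to-\infty$, and the LDP lower bound at $0$ is trivial since $V_N/t_N\to0$ in probability). For the upper deviation, truncate at $b_N=e^{\gamma N}$ as above and apply a Bernstein inequality to the truncated sum; for the lower deviation, write $Z_{\beta,N}=\sum_\sigma e^{\beta\sqrt NX_\sigma}\1_{\{X_\sigma\le\theta\sqrt N\}}+(\text{remainder})$ for a large fixed $\theta$, note that the main part has mean $\sim\E Z_{\beta,N}$, variance $O(e^{N(\beta^2-\log2)}(\E Z_{\beta,N})^2)$ and increments bounded by $e^{N\beta^2/2}$, and apply Bernstein again; these are exactly the estimates needed to close the first paragraph.

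The main obstacle is the truncation in the critical case $\beta=\sqrt{\log2/2}$: $b_N$ must be low enough ($b_N=o(2^{N/2}/t_N)$) for the truncated sum to obey a Cram\'er-type estimate, yet high enough ($\log b_N=\tfrac12N\log2-o(\sqrt N)$) that the truncated variance is exactly $\tfrac12$ — the value that reappears in the limit law of Theorem~\ref{CLT}(ii) — and one must then show that the discarded, extremely heavy-tailed part $\tilde V_N$ is super-exponentially negligible on the scale $t_N^2$. Establishing the latter requires a sharp handle on the log-normal tail of $Y_N(\sigma)$ and on the Poissonian count of exceedances, and it is this competition between the ``bulk'' and the ``extremal'' contributions that pins down the threshold $t_N=o(\sqrt{\log N})$ — and, for larger $t_N$, the failure of the normal approximation.
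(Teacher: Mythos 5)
Your proposal follows the same architecture as the paper's proof: reduce the logarithm to the linearized variable by exponential equivalence, truncate the heavy-tailed summands $Y_N(\sigma)$, apply G\"artner--Ellis to the truncated sum (with limiting variance $1$, resp.\ $\tfrac12$, which is where $x^2/2$ vs.\ $x^2$ comes from), show the discarded part is negligible on the scale $t_N^2$, and treat the overscaled regime separately. Within this skeleton you make genuinely different choices. The paper truncates in both temperature regimes at the single level $2^{N/2}t_N^{-1}$ (Lemma \ref{Lemma2}) and disposes of the excess with one union bound over the $2^N$ spins, which is exactly where $t_N=o(\sqrt N)$, resp.\ $t_N=o(\sqrt{\log N})$, enters; you instead truncate at $e^{\gamma N}$ with $\gamma\in(\beta(\beta_c-\beta),\tfrac12\log2)$ off criticality and at $2^{N/2}/r_N$, $t_N=o(r_N)$, at criticality, and there you control the excess by a Poisson-count/one-big-jump estimate rather than a union bound. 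Both routes work, but note that your critical-case argument tacitly needs $r_N=o(N^{1/4})$ (otherwise $\E J\not\to0$), and the claim $2^{N/2}\E\bar Y_N=o(t_N)$ should be verified rather than asserted: a computation as around \eqref{Defc} gives $2^{N/2}|\E\bar Y_N|\asymp r_N/\sqrt N$, so it does hold for such $r_N$, but this is precisely the delicate point the paper checks in \eqref{firstmoment}. For the overscaled regime you prove a direct Bernstein-type bound $P(|V_N|>\varepsilon\sqrt N)\le e^{-c(\varepsilon)N}$, whereas the paper obtains \eqref{eq1} more cheaply by reapplying part (i) with the admissible scaling $\sqrt{\gamma_N}$; your route is more work but self-contained.

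The one step whose stated justification would fail is the initial reduction. Bounding $P(|W_N-V_N|>\delta t_N)$ by $P(|U_N|>\tfrac12)+P\big(|V_N|>\sqrt{\delta t_N}\,e^{\frac N4(\log2-\beta^2)}\big)$ is fine, but Theorem \ref{LDP} cannot control the first term: $\{U_N>\tfrac12\}=\{\tfrac1N\log Z_{\beta,N}>F(\beta)+\tfrac{\log(3/2)}{N}\}$ is a deviation of size $o(1)$, and since the rate function of Theorem \ref{LDP} vanishes at $F(\beta)$, the large-deviation upper bound over the relevant closed set is $0$, i.e.\ vacuous. Moreover, at $\beta=\sqrt{\log2/2}$ the ``concentration of the bulk'' estimates you defer to are only developed in your third paragraph for $\beta<\sqrt{\log2/2}$. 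The fix is elementary and is exactly the paper's Lemma \ref{Lemma1}: Chebyshev with $\E T_{\beta,N}^2=(e^{N\beta^2}-1)2^{-N}\le e^{-N(\log2-\beta^2)}$ makes both terms exponentially small in $N$, which suffices at any speed $\gamma_N=o(N)$, uniformly over $\beta\le\sqrt{\log2/2}$.
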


\begin{remark}
\mbox{}
\begin{itemize}
\item[1.] Note that the restriction $\gamma_N = o(N)$ is natural in view of the LDP (Theorem \ref{LDP}): if one considers deviations of lower order than in the LDP, then the speed of convergence to zero of these probabilities is of lower order than the speed occuring in the LDP, which was $N$ in our case. 
\item[2.] The degenerated rate function appearing in \eqref{c2} reflects the superexponential decay of moderate deviation probabilities in case of overscaling. 
\item[3.] 
Observe that for $\beta = \sqrt{\log2/2}$ the obtained rate function is $I(x) = x^2$, which matches the fact that the limiting distribution in the CLT is $\mathcal{N}(0,1/2)$ and 
\begin{equation*}
\lim_{N \to \infty} \frac{1}{t_N^2} \log P(\mathcal{N}(0,1/2)> x \,t_N) ~=~ -x^2
\end{equation*}
for any $x > 0$.
\end{itemize}
\end{remark}

\section{Proof of Theorem \ref{MDP}}
This section is devoted to the proof of Theorem \ref{MDP}, which is based on the following idea: As a first step, we follow the idea of the CLT's proof and use the approximation $\log(1+x)=x+o(x)\text{ for } x\rightarrow 0$ to defer the proof of the LDP for 
\begin{equation}\label{Var1}
\frac{e^{\frac{N}{2}(\log 2-\beta^2)}}{t_N}\,\log\left(\frac{Z_{\beta,N}}{\E Z_{\beta,N}}\right)
\end{equation}
to the proof of the LDP for 
\begin{equation}\label{Var2}
\frac{e^{\frac{N}{2}(\log 2-\beta^2)}}{t_N}\, \frac{Z_{\beta,N}-\E Z_{\beta,N}}{\E Z_{\beta,N}}.
\end{equation}
To that end, we will show in Lemma \ref{Lemma1} that the random variables \eqref{Var1} and \eqref{Var2} are exponentially equivalent (for a definition see e.\,g.\ Definition 4.2.10 in \cite{Dem}), since it is know that exponentially equivalent random variables satisfy the same LDP (see e.\,g.\ Theorem 4.2.13 in \cite{Dem}). Then, we are left to prove the LDP for the random variable
\begin{equation*}
\frac{e^{\frac{N}{2}(\log 2-\beta^2)}}{t_N}\, \frac{Z_{\beta,N}-\E Z_{\beta,N}}{\E Z_{\beta,N}} ~=~\frac{1}{t_N \, 2^{N/2}}\sum_{\sigma \in \mathcal{S}_N} Y_N(\sigma),
\end{equation*}
where $(Y_N(\sigma); \sigma \in \mathcal{S}_N, N \in \N)$ is a triangular array of independent random variables, which were defined in \eqref{DefofY}. However, the random variable $Y_N(\sigma)$ does not have finite exponential moments, which is why we use again the concept of exponential equivalence to switch over to the truncated random variables $Y^t_N(\sigma)$, where $Y^t_N(\sigma)\Defi Y_N(\sigma) \1_{\left\{Y_N(\sigma) \leq 2^{N/2}t_N^{-1}\right\}}$ (see Lemma \ref{Lemma2}), which can be studied by means of the G\"artner-Ellis theorem (cf.\ e.\,g.\ Theorem 2.3.6 in \cite{Dem}). 

We prepare the proof of Theorem \eqref{MDP} by stating and proving the above-mentioned lemmata:

\begin{lemma}\label{Lemma1}
Let $\beta \leq \sqrt{\log2/2}$. Then, 
\begin{equation*}
\frac{e^{\frac{N}{2}(\log2-\beta^2)}}{t_N} \, \log \left(\frac{Z_{\beta, N}}{\E Z_{\beta, N}}\right) \text{ and }\, 
\frac{e^{\frac{N}{2}(\log2-\beta^2)}}{t_N} \, \left(\frac{Z_{\beta, N} -\E Z_{\beta, N}}{\E Z_{\beta, N}}\right)
\end{equation*}
are exponentially equivalent for any speed $\gamma_N = o(N)$.  
\end{lemma}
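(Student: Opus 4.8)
The plan is to establish the stronger quantitative fact that, for every $\delta>0$, the probability that the two random variables differ by more than $\delta$ decays at least exponentially fast in $N$. Write $D_N$ for that difference, i.e.
\[
D_N ~\Defi~ \frac{e^{\frac{N}{2}(\log2-\beta^2)}}{t_N}\,\log\left(\frac{Z_{\beta,N}}{\E Z_{\beta,N}}\right) ~-~ \frac{e^{\frac{N}{2}(\log2-\beta^2)}}{t_N}\,\frac{Z_{\beta,N}-\E Z_{\beta,N}}{\E Z_{\beta,N}}.
\]
Such an exponential bound already suffices for the claim: if $P(|D_N|>\delta)\le C_\delta\,e^{-cN}$ for all large $N$ with some $c>0$, then for an arbitrary speed $\gamma_N=o(N)$ one gets $\gamma_N^{-1}\log P(|D_N|>\delta)\le \gamma_N^{-1}\log C_\delta-c\,N/\gamma_N\to-\infty$, since $\gamma_N\to\infty$ and $N/\gamma_N\to\infty$; by Definition~4.2.10 in \cite{Dem} this is exactly exponential equivalence at speed $\gamma_N$.

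To get the exponential bound I would introduce $W_N\Defi (Z_{\beta,N}-\E Z_{\beta,N})/\E Z_{\beta,N}$, so that $1+W_N=Z_{\beta,N}/\E Z_{\beta,N}>0$ and $\E W_N=0$; moreover, by \eqref{DefofY} and the value $s_N^2=1-e^{-N\beta^2}$ of the variance of $Y_N(\sigma)$ recorded in the remark following Theorem \ref{CLT},
\[
\E W_N^2 ~=~ e^{-N(\log2-\beta^2)}\,s_N^2 ~\le~ e^{-N(\log2-\beta^2)}.
\]
In this notation $D_N=\frac{e^{\frac{N}{2}(\log2-\beta^2)}}{t_N}(\log(1+W_N)-W_N)$, which is $\le 0$ because $\log(1+w)\le w$ for $w>-1$, so $|D_N|=\frac{e^{\frac{N}{2}(\log2-\beta^2)}}{t_N}(W_N-\log(1+W_N))$. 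The elementary inequality $w-\log(1+w)\le w^2$, valid for all $w\ge-\tfrac12$ (the function $w\mapsto w^2-w+\log(1+w)$ vanishes at $0$ and is monotone on each side of $0$ throughout $[-\tfrac12,\infty)$), shows that on $\{W_N\ge-\tfrac12\}$ one has $|D_N|\le\frac{e^{\frac{N}{2}(\log2-\beta^2)}}{t_N}\,W_N^2$. Peeling off the event $\{W_N<-\tfrac12\}$ and passing to $|W_N|$ then yields
\[
\{|D_N|>\delta\} ~\subseteq~ \{|W_N|>\rho_N\},\qquad \rho_N ~\Defi~ \min\left\{\tfrac12,\ \sqrt{\delta\,t_N}\;e^{-\frac{N}{4}(\log2-\beta^2)}\right\}.
\]

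From here Chebyshev's inequality finishes the estimate: $P(|W_N|>\rho_N)\le\E W_N^2/\rho_N^2\le e^{-N(\log2-\beta^2)}/\rho_N^2$, and since $\rho_N^2=\min\{\tfrac14,\ \delta\,t_N\,e^{-\frac{N}{2}(\log2-\beta^2)}\}$ the right-hand side is, for all large $N$ (using only $t_N\to\infty$), bounded by $\max\{4,\tfrac1\delta\}\,e^{-\frac{N}{2}(\log2-\beta^2)}$. As $\beta^2\le\tfrac12\log2<\log2$, this is of the form $C_\delta\,e^{-cN}$ with $c=\tfrac12(\log2-\beta^2)>0$, completing the argument outlined in the first paragraph.

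I expect the only genuinely delicate point to be matching exponents: the Taylor remainder $W_N-\log(1+W_N)$ is of order $W_N^2$, which is typically of size $e^{-N(\log2-\beta^2)}$, but it is inflated by the prefactor $e^{\frac{N}{2}(\log2-\beta^2)}/t_N$; it survives only because $e^{-N(\log2-\beta^2)}\cdot e^{\frac{N}{2}(\log2-\beta^2)}=e^{-\frac{N}{2}(\log2-\beta^2)}$ is still exponentially small, and this is precisely where the hypothesis $\beta^2<\log2$ enters. A secondary nuisance is that the clean bound $w-\log(1+w)\le w^2$ breaks down as $w\downarrow-1$, which forces the separate treatment of $\{W_N<-\tfrac12\}$; but this event is absorbed by the very same second-moment bound, so no further concentration input (e.g.\ from Theorems \ref{LLN} or \ref{LDP}) is actually needed.
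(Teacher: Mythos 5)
Your argument is correct and is essentially the paper's own proof: both rest on the bound $|\log(1+x)-x|\le x^2$ for $x\ge-\tfrac12$, a separate treatment of the event $\{W_N<-\tfrac12\}$, Chebyshev's inequality with the exact second moment $\E W_N^2=(e^{N\beta^2}-1)/2^N$, and the observation that the resulting bound of order $e^{-\frac{N}{2}(\log 2-\beta^2)}$ is superexponentially small at any speed $\gamma_N=o(N)$. The only cosmetic difference is that you package the two bad events into a single containment $\{|D_N|>\delta\}\subseteq\{|W_N|>\rho_N\}$ rather than summing two probabilities as the paper does.
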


\begin{proof}
Let $\varepsilon > 0$ and $T_{\beta,N} \Defi (Z_{\beta,N}-\E Z_{\beta, N})/\E Z_{\beta N}$. Since $|\log(1+x)-x| \leq x^2$ for all $x \geq -1/2$, we find
\begin{eqnarray*}
&& P\left(\left|\frac{e^{\frac{N}{2}(\log2-\beta^2)}}{t_N} \, \log \left(\frac{Z_{\beta, N}}{\E Z_{\beta, N}}\right) - \frac{e^{\frac{N}{2}(\log2-\beta^2)}}{t_N} \, \left(\frac{Z_{\beta, N} -\E Z_{\beta, N}}{\E Z_{\beta, N}}\right)\right|> \varepsilon\right) \\
&=& P\left(\left|\log \left(1+ T_{\beta, N}\right) - T_{\beta, N}\right|> \varepsilon \, t_N \,e^{-\frac{N}{2}(\log2-\beta^2)}\right)\\
&\leq& P\left(T_{\beta, N} < -\frac12\right) + P\left(T_{\beta, N}^2 > \varepsilon \, t_N\, e^{-\frac{N}{2}(\log2-\beta^2)}\right)\\
&\leq& \left(4 + \varepsilon^{-1}\, t_N^{-1}\, e^{\frac{N}{2}(\log2-\beta^2)}\right) \E T_{\beta, N}^2\\
&\leq& e^{\frac{N}{2}(\log2-\beta^2)} \E T_{\beta, N}^2
\end{eqnarray*}
for $N$ sufficiently large, where we have made use of Markov's inequality to obtain the last but one line. A direct calculation yields 
\begin{equation*}
\E T_{\beta, N}^2 ~=~ \frac{e^{N \beta^2}-1}{2^{N}} ~\leq~ e^{N(\beta^2-\log2)}
\end{equation*}
and, therefore,
\begin{eqnarray*}
&& \limsup_{N \to \infty} \frac{1}{\gamma_N} \log P\left(\left|\frac{e^{\frac{N}{2}(\log2-\beta^2)}}{t_N} \, \log \left(\frac{Z_{\beta, N}}{\E Z_{\beta, N}}\right) - \frac{e^{\frac{N}{2}(\log2-\beta^2)}}{t_N} \, \left(\frac{Z_{\beta, N} -\E Z_{\beta, N}}{\E Z_{\beta, N}}\right)\right|> \varepsilon\right)\\
&\leq& \limsup_{N \to \infty} \frac{1}{\gamma_N} \log \left(e^{\frac{N}{2}(\log2-\beta^2)}\,e^{N(\beta^2-\log2)}\right)~=~-\infty.
\end{eqnarray*}
\end{proof}

\begin{lemma}\label{Lemma2}
Let $\beta \leq \sqrt{\log2/2}$ and assume 
\begin{equation*}
t_N ~=~ 
\begin{cases}
o(\sqrt{N})&\text{ if } \beta < \sqrt{\log2/2}\\
o(\sqrt{\log N})&\text{ if } \beta = \sqrt{\log2/2}.  
\end{cases}
\end{equation*}
Then,
\begin{equation*}
\frac{1}{t_N 2^{N/2}} \sum_{\sigma \in \mathcal{S}_N} Y_N(\sigma) \text{ and }\, 
\frac{1}{t_N 2^{N/2}} \sum_{\sigma \in \mathcal{S}_N} Y_N^t(\sigma)
\end{equation*}
are exponentially equivalent on the scale $t_N^2$.  
\end{lemma}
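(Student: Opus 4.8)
The plan is to verify exponential equivalence on the scale $t_N^2$ by hand, i.e.\ to show that for every $\varepsilon>0$
\[
\limsup_{N\to\infty}\frac1{t_N^2}\log P\Bigl(\Bigl|\tfrac1{t_N 2^{N/2}}\sum_{\sigma\in\mathcal S_N}\bigl(Y_N(\sigma)-Y_N^t(\sigma)\bigr)\Bigr|>\varepsilon\Bigr)=-\infty.
\]
The first point is that the truncation only cuts mass on the right: since $Y_N(\sigma)=e^{\beta\sqrt N X_\sigma-N\beta^2}-e^{-N\beta^2/2}\ge-1$ and $Y_N(\sigma)-Y_N^t(\sigma)=Y_N(\sigma)\1_{\{Y_N(\sigma)>2^{N/2}/t_N\}}$, the difference of the two normalised sums is nonnegative and is strictly positive only if at least one coordinate exceeds the truncation level $2^{N/2}/t_N$. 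A union bound over the $2^N$ configurations, using that the $Y_N(\sigma)$ are identically distributed, then yields
\[
P\Bigl(\Bigl|\tfrac1{t_N 2^{N/2}}\sum_{\sigma}\bigl(Y_N(\sigma)-Y_N^t(\sigma)\bigr)\Bigr|>\varepsilon\Bigr)\ \le\ 2^N\,P\bigl(Y_N(\sigma)>2^{N/2}/t_N\bigr),
\]
and it remains to estimate the right-hand side, which no longer depends on $\varepsilon$.

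Next I would turn the tail event for $Y_N$ into a Gaussian tail event. Solving $Y_N(\sigma)>2^{N/2}/t_N$ for $X_\sigma$ gives $\{Y_N(\sigma)>2^{N/2}/t_N\}=\{X_\sigma>a_N\}$ with
\[
a_N=\frac1{\beta\sqrt N}\log\Bigl(e^{N\beta^2/2}+\tfrac{2^{N/2}}{t_N}e^{N\beta^2}\Bigr).
\]
Since $t_N=o(\sqrt N)$ in both cases, the summand $\tfrac{2^{N/2}}{t_N}e^{N\beta^2}$ dominates inside the logarithm, so
\[
a_N=\frac{\sqrt N}{\beta}\Bigl(\beta^2+\tfrac{\log2}{2}\Bigr)-\frac{\log t_N}{\beta\sqrt N}+o\bigl(N^{-1/2}\bigr),\qquad\text{hence}\qquad a_N^2=\frac{b^2}{\beta^2}N-\frac{2b}{\beta^2}\log t_N+o(1),
\]
where $b:=\beta^2+(\log2)/2$. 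The crucial step is then to use the \emph{refined} Gaussian estimate $P(X>x)\le\frac1{x\sqrt{2\pi}}e^{-x^2/2}$ for $x>0$, rather than the cruder bound $e^{-x^2/2}$: the polynomial prefactor is precisely what is needed in the critical case. Combining it with the elementary identity $\tfrac{b^2}{2\beta^2}-\log2=\tfrac{(2\beta^2-\log2)^2}{8\beta^2}$, one obtains for $N$ large
\[
\log\bigl(2^N P(X>a_N)\bigr)\ \le\ -\,\frac{(2\beta^2-\log2)^2}{8\beta^2}\,N\;-\;\log\bigl(a_N\sqrt{2\pi}\bigr)\;+\;\frac{b}{\beta^2}\log t_N\;+\;o(1).
\]

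Finally I would split into the two regimes. If $\beta<\sqrt{\log2/2}$, then $c:=\tfrac{(2\beta^2-\log2)^2}{8\beta^2}>0$, while $\log(a_N\sqrt{2\pi})=O(\log N)$ and $\log t_N=o(N)$; thus the bound is $-cN+o(N)$, and dividing by $t_N^2=o(N)$ sends it to $-\infty$. If $\beta=\sqrt{\log2/2}$, then $c=0$, $b=\log2$, $b/\beta^2=2$ and $a_N\sim\sqrt{2\log2}\,\sqrt N$, so the estimate collapses to $\log(2^N P(X>a_N))\le-\tfrac12\log N+2\log t_N+O(1)$; dividing by $t_N^2$ gives $-\tfrac{\log N}{2t_N^2}+\tfrac{2\log t_N}{t_N^2}+o(1)$, which tends to $-\infty$ exactly because $t_N\to\infty$ and $t_N^2=o(\log N)$. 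I expect the critical case $\beta=\sqrt{\log2/2}$ to be the only genuine obstacle: there the exponential decay rate of $2^N P(X>a_N)$ vanishes and the whole argument rests on extracting the extra $\sqrt N$ from the Gaussian prefactor, which is also what dictates the restriction $t_N=o(\sqrt{\log N})$ in the statement.
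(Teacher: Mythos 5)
Your proposal is correct and follows essentially the same route as the paper: a union bound over the $2^N$ configurations reduces the difference event to a single exceedance $\{X_{\sigma}>c_N(\beta)\}$, the refined Gaussian tail bound with the $1/x$ prefactor is applied, and the two regimes are separated exactly as in the paper (the constant $\tfrac{(2\beta^2-\log 2)^2}{8\beta^2}$ is the paper's $\tfrac12(\beta-\tfrac{\log 2}{2\beta})^2$, and the $-\tfrac12\log N$ from the prefactor is what rescues the critical case under $t_N=o(\sqrt{\log N})$). No gaps to report.
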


\begin{proof}
We get
\begin{eqnarray*}
&& P\left(\left|\frac{1}{t_N 2^{N/2}} \sum_{\sigma \in \mathcal{S}_N} Y_N(\sigma) - \frac{1}{t_N 2^{N/2}} \sum_{\sigma \in \mathcal{S}_N} Y_N^t(\sigma)\right|> \varepsilon \right)\\ 
&=& P\left(\left|\frac{1}{t_N\, 2^{N/2}} \sum_{\sigma \in \mathcal{S}_N} Y_N(\sigma) \1_{\left\{Y_N(\sigma) > 2^{N/2}t_N^{-1}\right\}}\right|> \varepsilon \right)\\
&\leq& P\left(\exists \, \sigma \in \mathcal{S}_N: Y_N(\sigma) > 2^{N/2}\,t_N^{-1}\right)\\
&\leq& 2^N P\left(Y_N(\sigma_0) > 2^{N/2}\,t_N^{-1}\right)\\
&=& 2^N P\left(X_{\sigma_0} > c_N(\beta)\right)
\end{eqnarray*}
where $\sigma_0 \in \mathcal{S}_N$ and 
\begin{equation}\label{Defc}
c_N(\beta) ~\Defi~ \frac{1}{\beta \sqrt{N}} \log\left(e^{N\beta^2} 2^{N/2} t_N^{-1}+ e^{N \beta^2/2}\right) ~=~ \sqrt{N}\left(\beta+\frac{\log 2}{2\beta}\right) - \frac{\log t_N}{\beta \sqrt{N}} + o\left(N^{-1/2}\right).
\end{equation}
Making use of the standard estimate 
\begin{equation}\label{standardestimate}
\frac{x}{x^2+1}\frac{1}{\sqrt{2\pi}}\,e^{-x^2/2} ~\leq~ P(\mathcal{N}(0,1) > x) ~\leq~ \frac{1}{x} \frac{1}{\sqrt{2\pi}}\,e^{-x^2/2},
\end{equation}
which holds for all $x > 0$, we get
\begin{eqnarray*}
&& \frac{1}{t_N^2} \log P\left(\left|\frac{1}{t_N 2^{N/2}} \sum_{\sigma \in \mathcal{S}_N} Y_N(\sigma) - \frac{1}{t_N 2^{N/2}} \sum_{\sigma \in \mathcal{S}_N} Y_N^t(\sigma)\right|> \varepsilon \right)\\ 
&\leq& \frac{1}{t_N^2} \log \left(2^N \frac{1}{c_N(\beta) \sqrt{2\pi}}\,e^{-c_N(\beta)^2/2} \right)\\
&=& \frac{1}{t_N^2} \log \left(2^N \frac{1}{\sqrt{N}}\,e^{-c_N(\beta)^2/2} \right) +o(1) \\
&=& \frac{N}{t_N^2} \log2 - \frac{c_N(\beta)^2}{2 t_N^2} - \frac{\log N}{2 t_N^2} +o(1) \\
&=& - \frac{N}{2 t_N^2} \left(\beta - \frac{\log2}{2\beta}\right)^2 - \frac{\log N}{2 t_N^2} +o(1)\rightarrow -\infty
\end{eqnarray*}
as $N \to \infty$. Note that $(\beta - \log2/(2\beta))^2 > 0$ if and only if $\beta \neq \sqrt{\log2/2}$ so that the last line follows from the conditions made on the asympotic behavior of $t_N$.
\end{proof}

Now that we have gathered all preliminary results, we can start with a proof of this article's main theorem:

\begin{proof}[Proof of Theorem \ref{MDP}]
We start with a proof of (i)'s first part and (ii). To that purpose, let $\beta \leq \sqrt{\log 2/2}$ and assume 
\begin{equation*}
t_N ~=~ 
\begin{cases}
o(\sqrt{N})&\text{ if } \beta < \sqrt{\log2/2}\\
o(\sqrt{\log N})&\text{ if } \beta = \sqrt{\log2/2}.  
\end{cases}
\end{equation*}
By means of Lemma \ref{Lemma1} and Lemma \ref{Lemma2} it suffices to prove the desired LDP for 
\begin{equation*}
\frac{1}{t_N 2^{N/2}} \sum_{\sigma \in \mathcal{S}_N} Y_N^t(\sigma).
\end{equation*}
This follows directly from the G\"artner-Ellis theorem once we have proved
\begin{equation*}
\lim_{N \to \infty} \frac{1}{t_N^2} \log \E \left[e^{\lambda \,t_N^2 \,\frac{1}{t_N \, 2^{N/2}}\,\sum_{\sigma \in \mathcal{S}_N} Y_N^t(\sigma)}\right] ~=~ \Lambda(\lambda) ~\Defi~ \begin{cases}
\frac{\lambda^2}{2}& \text{ if } \beta < \sqrt{\frac{\log 2}{2}}\\
\frac{\lambda^2}{4}& \text{ if } \beta = \sqrt{\frac{\log 2}{2}}
\end{cases} 
\end{equation*}
for all $\lambda \in \R$. Since
\begin{equation*}
t_N^{-2} \log \E \left[e^{\lambda \,t_N^2 \,\frac{1}{t_N \, 2^{N/2}}\,\sum_{\sigma \in \mathcal{S}_N} Y_N^t(\sigma)}\right]
~=~ t_N^{-2}\, 2^N \log \left(1+\left(\E \left[e^{\lambda \,t_N \,2^{-N/2} Y_N^t(\sigma_0)}\right]-1\right)\right) 
\end{equation*}
for any $\sigma_0 \in \mathcal{S}_N$, this follows, using the Taylor expansion $\log(1+x) = x + \mathcal{O}(x^2)$ as $x \rightarrow 0$, from
\begin{equation}\label{main}
t_N^{-2}\, 2^N\left(\E \left[e^{\lambda \,t_N \,2^{-N/2} Y_N^t(\sigma_0)}\right] -1\right) ~=~ \Lambda(\lambda) + o(1), 
\end{equation}
which we are going to prove in the sequel. To that purpose, we calculate the asymptotics of the first three moments of $Y^t_N(\sigma_0)$ and get 
\begin{eqnarray}
\E Y^t_N(\sigma_0) &=& o\big(t_N \,2^{-N/2}\big), \label{firstmoment} \\
\E Y^t_N(\sigma_0)^2 &=& 2 \, \Lambda(\lambda) +o(1), \label{secondmoment}\\
\E |Y^t_N(\sigma_0)|^3 &=& o(t_N^{-1}\, 2^{N/2}) \label{thirdmoment}. 
\end{eqnarray} 

Ad \eqref{firstmoment}: With $c_N(\beta)$ (cf.\ \eqref{Defc}) we have
\begin{eqnarray*} 
\E Y^t_N(\sigma_0)
&=& e^{-N \beta^2} \E \left[\left(e^{\sqrt{N} \beta X_{\sigma_0}} - e^{N \beta^2/2}\right)\1_{\{e^{\sqrt{N}\beta X_{\sigma_0}}-e^{N\beta^2/2} \leq \,2^{N/2}\, e^{N \beta^2}\,t_N^{-1}\}}\right]\\
&=& e^{-N \beta^2/2} \left(\frac{1}{\sqrt{2\pi}} \int_{-\infty}^{c_N(\beta)} e^{-\frac12 (x-\sqrt{N}\beta)^2}dx - P\big(X_{\sigma_0} \leq c_N(\beta)\big)\right)\\
&=& e^{-N \beta^2/2} P\big(X_{\sigma_0} > c_N(\beta)-\sqrt{N}\beta\big) \left(\frac{P\left(X_{\sigma_0} > c_N(\beta) \right)}{P\big(X_{\sigma_0} > c_N(\beta)-\sqrt{N}\beta\big)}-1\right).
\end{eqnarray*}
Using the standard estimate \eqref{standardestimate} for a Gaussian random variable, we see 
\begin{equation*}
\frac{P\left(X_{\sigma_0} > c_N(\beta)\right)}{P\big(X_{\sigma_0} > c_N(\beta)-\sqrt{N}\beta\big)} ~=~ o(1)
\end{equation*}
and
\begin{equation*}
P\big(X_{\sigma_0} > c_N(\beta)-\sqrt{N}\beta\big) ~=~ o\left(e^{-(c_N(\beta)-\sqrt{N}\beta)^2/2}\right),
\end{equation*}
which yields \eqref{firstmoment} as
\begin{eqnarray}\label{eq} 
t_N^{-1} \, 2^{N/2} \E Y^t_N(\sigma_0)
&=& o\left(t_N^{-1} \, e^{N(\log2/2 - \beta^2/2)} \,e^{-\frac{N}{2} \left(\frac{\log2}{2\beta}\right)^2+\frac{\log2 \log t_N}{2\beta^2}}\right)\nonumber\\
&=& o\left((t_N)^{\log2/(2\beta^2)-1} \, e^{-\frac{N}{2} \left(\beta -\frac{\log2}{2\beta}\right)^2}\right)\nonumber\\
&=& o(1),
\end{eqnarray}
where we have used in the last line that 
\begin{itemize}
\item[$\cdot$] $\log2/(2\beta^2)-1 = 0$ and $(\beta-\log2/(2\beta))^2 = 0$ if $\beta = \sqrt{\log2/2}$ and
\item[$\cdot$] $(\beta-\log2/(2\beta))^2 > 0$ if $\beta < \sqrt{\log2/2}$.
\end{itemize}

Ad \eqref{secondmoment}: It is
\begin{eqnarray*}
\E Y_N^t(\sigma_0)^2 
&=& \frac{1}{\sqrt{2\pi}} \int_{-\infty}^{c_N(\beta)} e^{-\frac12 x^2} \left(\frac{e^{\sqrt{N}\beta x}-e^{N\beta^2/2}}{e^{N\beta^2}}\right)^2dx\\
&=& \frac{1}{\sqrt{2\pi}} \int_{-\infty}^{c_N(\beta)} e^{-\frac12 x^2+2\sqrt{N}\beta x-2N \beta^2}dx +o(1)\\
&=& \frac{1}{\sqrt{2\pi}} \int_{-\infty}^{\sqrt{N}(\log2/(2\beta)-\beta)+o(1)} e^{-\frac12 x^2}dx +o(1)\\
&\rightarrow& 
\begin{cases} 
1& \text{ if } \beta < \sqrt{\frac{\log 2}{2}}\\
\frac12& \text{ if } \beta = \sqrt{\frac{\log 2}{2}}
\end{cases}
\text{ as } N \to \infty.
\end{eqnarray*}

Ad \eqref{thirdmoment}: For every $\varepsilon > 0$ it is 
\begin{eqnarray*}
&& t_N \,2^{-\frac{N}{2}} \E |Y_N^t(\sigma_0)|^3 \\
&=& t_N \,2^{-\frac{N}{2}} \E \left[|Y_N^t(\sigma_0)|^3\1_{\{|Y_N(\sigma_0)|\leq \varepsilon\,t_N^{-1} \,2^{N/2}\}}\right] + t_N \,2^{-\frac{N}{2}} \E\left[|Y_N^t(\sigma_0)|^3\1_{\{|Y_N(\sigma_0)|> \varepsilon \,t_N^{-1}\,2^{N/2}\}}\right]\\
&\leq& \varepsilon \E Y_N^t(\sigma_0)^2 + t_N^{-2}\, 2^N P\big(|Y_N(\sigma_0)| > \varepsilon\,t_N^{-1}\, 2^{N/2}\big)\\
&=& \varepsilon \E Y_N^t(\sigma_0)^2 + t_N^{-2} \,2^N P\big(X_N(\sigma_0) > c_N(\beta) + \mathcal{O}(N^{-1/2})\big)\\
&=& \varepsilon \E Y_N^t(\sigma_0)^2 + o\big(t_N^{-2} \,2^N e^{-c_N(\beta)^2/2}\big)\\
&=& \varepsilon \E Y_N^t(\sigma_0)^2 + o\big(t_N^{-2} \,e^{N \log2 - \frac{N}{2}(\beta + \log2/(2\beta))^2 + (1+\log2/(2\beta))\log t_N}\big)\\
&=& \varepsilon \E Y_N^t(\sigma_0)^2 + o\big(t_N^{\log2/(2\beta^2)-1}e^{-\frac{N}{2}(\beta-\log2/(2\beta))^2}\big)\\
&=& \varepsilon \E Y_N^t(\sigma_0)^2 + o(1),
\end{eqnarray*} 
where we have used the same argument as in \eqref{eq} to derive the last line. Thus, with the help of \eqref{secondmoment} we see
\begin{equation*}
\lim_{N \to \infty} t_N \,2^{-N/2} \E |Y_N^t(\sigma_0)|^3 ~\leq~ \varepsilon
\end{equation*}   
which yields \eqref{thirdmoment} as $\varepsilon$ was arbitrary. 

Now, we see that \eqref{main} follows with the help of \eqref{firstmoment} and \eqref{secondmoment} from
\begin{equation*}
\E \left[e^{\lambda \,t_N \,2^{-N/2} Y_N^t(\sigma_0)} - \sum_{i=0}^2 \frac{\left(\lambda \,t_N \,2^{-N/2} Y_N^t(\sigma_0)\right)^i}{i!}\right] ~=~ o\left(\frac{t_N^2}{2^N}\right). 
\end{equation*}
Since $\lambda \,t_N \,2^{-N/2} Y_N^t(\sigma_0)$ is bounded by $\lambda$ it can easily be seen, using the Lagrange form of the remainder in Taylor's formula, that
\begin{eqnarray*}
&& \left|\E \left[e^{\lambda \,t_N \,2^{-N/2} Y_N^t(\sigma_0)} - \sum_{i=0}^2 \frac{\left(\lambda \,t_N \,2^{-N/2} Y_N^t(\sigma_0)\right)^i}{i!}\right]\right| ~\leq~ \frac{e^{\lambda}}{3!} \lambda^3 \,t_N^3 \,2^{-3 N/2} \E \left[\mid Y_N^t(\sigma_0)\mid^3\right],
\end{eqnarray*}
which finishes the proofs of (i)'s first part and (ii) with the help of \eqref{thirdmoment}.

For the the second part of (i), let $\gamma_N = o(N)$ be an arbitrary speed. It suffices to prove
\begin{eqnarray}
\lim_{N \to \infty}\frac{1}{\gamma_N} \log P\left(\left|\frac{e^{\frac{N}{2}(\log2-\beta^2)}}{t_N} \, \log \bigg(\frac{Z_{\beta, N}}{\E Z_{\beta, N}}\bigg)\right|> \varepsilon\right) ~=~ - \infty,\label{eq1}\\
\lim_{N \to \infty}\frac{1}{\gamma_N} \log P\left(\left|\frac{e^{\frac{N}{2}(\log2-\beta^2)}}{t_N} \, \log \bigg(\frac{Z_{\beta, N}}{\E Z_{\beta, N}}\bigg)\right|\leq \varepsilon\right) ~=~ 0\label{eq2}
\end{eqnarray}
for any $\varepsilon > 0$. The validity of \eqref{eq1} follows directly from
\begin{eqnarray*}
&&\frac{1}{\gamma_N} \log P\left(\left|\frac{e^{\frac{N}{2}(\log2-\beta^2)}}{t_N} \, \log \bigg(\frac{Z_{\beta, N}}{\E Z_{\beta, N}}\bigg)\right|> \varepsilon\right)\\
&=& \frac{1}{\gamma_N} \log P\left(\left|\frac{e^{\frac{N}{2}(\log2-\beta^2)}}{\sqrt{\gamma_N}} \, \log \bigg(\frac{Z_{\beta, N}}{\E Z_{\beta, N}}\bigg)\right|> \varepsilon\frac{t_N}{\sqrt{\gamma_N}}\right)
\end{eqnarray*}
since (it holds $\liminf t_N/\sqrt{N} > 0$ and $\gamma_N = o(N)$ in this case)
\begin{equation*}
\frac{t_N}{\sqrt{\gamma_N}} ~=~ \frac{t_N}{\sqrt{N}} \sqrt{\frac{N}{\gamma_N}} ~\rightarrow~ \infty
\end{equation*} 
and
\begin{equation*}
\lim_{N \to \infty} \frac{1}{\gamma_N} \log P\left(\left|\frac{e^{\frac{N}{2}(\log2-\beta^2)}}{\sqrt{\gamma_N}} \, \log \bigg(\frac{Z_{\beta, N}}{\E Z_{\beta, N}}\bigg)\right|> \delta\right) ~=~ -\frac{\delta^2}{2} 
\end{equation*}
for any $\delta > 0$ by the first part of (i), which we proved above. Finally, this also yields the validity of \eqref{eq2} as \eqref{eq1} implies
\begin{equation*}
\lim_{N \to \infty} P\left(\left|\frac{e^{\frac{N}{2}(\log2-\beta^2)}}{t_N} \, \log \bigg(\frac{Z_{\beta, N}}{\E Z_{\beta, N}}\bigg)\right|\leq \varepsilon\right) ~=~ 1.
\end{equation*}
\end{proof}

\begin{remark} 
A LDP for $$\frac{e^{\frac{N}{2}(\log2-\beta^2)}}{t_N} \, \log \bigg(\frac{Z_{\beta, N}}{\E Z_{\beta, N}}\bigg)$$ in the case $\beta = \sqrt{\log 2/2}, \liminf_{N \to \infty} t_N/\sqrt{\log N} > 0$ is still an open question. By Lemma \ref{Lemma1} this random variable is exponentially equivalent to $t_N^{-1} 2^{-N/2} \sum_{\sigma \in \mathcal{S}_N} Y_N(\sigma)$ and it can even be shown that $t_N^{-1} 2^{-N/2} \sum_{\sigma \in \mathcal{S}_N} Y_N^t(\sigma)$ satisfies the LDP with speed $t_N^2$ and rate function $I(x) = x^2$ under the natural condition $t_N = o(\sqrt{N})$. However, one can show that in this case $t_N^{-1} 2^{-N/2} \sum_{\sigma \in \mathcal{S}_N} Y_N(\sigma)$ and $t_N^{-1} 2^{-N/2} \sum_{\sigma \in \mathcal{S}_N} Y_N(\sigma)$ are \emph{not} exponentially equivalent, since $Y_N(\sigma)$'s tails become too heavy and extremal events start to dominate the sum's behavior. This is the same effect that can be observed in the CLT, where it engenders a breakdown of the standard CLT. 
\end{remark} 

\begin{acknowledgements}
The author R.\ Meiners is supported by the German National Academic Foundation and the author A.\ Reichenbachs by Deutsche Forschungsgemeinschaft via SFB \textbar TR12. The authors also thank Peter Eichelsbacher and Matthias L\"{o}we for suggesting this project and fruitful discussions.
\end{acknowledgements}

\bibliography{REMpaperbib}

\end{document}